\theoremstyle{plain}
\numberwithin{equation}{section}
\newcommand{\BN}{{\mathbb N}}
\newcommand{\cB}{{\mathcal B}}
\newcommand{\cF}{{\mathcal F}}
\newcommand{\cG}{{\mathcal G}}
\newcommand{\cI}{{\mathcal I}}
\newcommand{\cV}{{\mathcal V}}
\newcommand{\cW}{{\mathcal W}}\newcommand{\cX}{{\mathcal X}}
\newcommand{\cY}{{\mathcal Y}}\newcommand{\cZ}{{\mathcal Z}}
\newcommand{\wtilA}{\widetilde{A}}\newcommand{\wtilB}{\widetilde{B}}
\newcommand{\wtilC}{\widetilde{C}}\newcommand{\wtilD}{\widetilde{D}}
\newcommand{\wtilU}{\widetilde{U}}
\newcommand{\whatU}{\widehat{U}}
\newcommand{\kr}{\textup{Ker\,}}
\newcommand{\cokr}{\textup{Coker\,}}
\newcommand{\diag}{\textup{diag\,}}
\newcommand{\mat}[2]{\ensuremath{\left[\begin{array}{#1}#2\end{array} \right]}}
\newcommand{\sbm}[1]{\left[\begin{smallmatrix} #1\end{smallmatrix}\right]}
\newcommand{\wtil}[1]{{\widetilde{#1}}}
\newcommand{\what}[1]{{\widehat{#1}}}
\newcommand{\ands}{\quad\mbox{and}\quad}
\theoremstyle{plain}
\newtheorem{theorem}{Theorem}[section]
\newtheorem{corollary}[theorem]{Corollary}
\newtheorem{lemma}[theorem]{Lemma}
\newtheorem{proposition}[theorem]{Proposition}
\theoremstyle{definition}
\newtheorem{definition}[theorem]{Definition}
\newtheorem{example}[theorem]{Example}
\newtheorem{remark}[theorem]{Remark}
\newcommand{\Ind}{\textup{Ind}}
\begin{document}

\title{Equivalence after extension and Schur coupling do not coincide, on essentially incomparable Banach spaces}

\author[S. ter Horst]{S. ter Horst}
\address{S. ter Horst, Department of Mathematics, Unit for BMI, North-West
University, Potchefstroom, 2531 South Africa, and DST-NRF Centre of Excellence in Mathematical and Statistical Sciences (CoE-MaSS)}
\email{Sanne.TerHorst@nwu.ac.za}

\author[M. Messerschmidt]{M. Messerschmidt}
\address{M. Messerschmidt, Department of Mathematics and Applied Mathematics; University of Pretoria; Private bag X20 Hatfield; 0028 Pretoria; South Africa}
\email{mmesserschmidt@gmail.com}

\author[A.C.M. Ran]{A.C.M. Ran}
\address{A.C.M. Ran, Department of Mathematics, FEW, VU university Amsterdam, De Boelelaan 1081a, 1081 HV Amsterdam, The Netherlands and Unit for BMI, North-West~University, Potchefstroom, South Africa}
\email{a.c.m.ran@vu.nl}

\author[M. Roelands]{M. Roelands}
\address{M. Roelands,
School of Mathematics, Statistics \& Actuarial
Science,
Cornwallis Building, University of Kent,
Canterbury, Kent CT2 7NF,
UK}
\email{mark.roelands@gmail.com}


\thanks{This work is based on the research supported in part by the National Research Foundation of South Africa (Grant Numbers 90670 and 93406).}

\subjclass[2010]{Primary 47A62; Secondary 47A53}

\keywords{Equivalence after extension; Schur coupling; generalized invertible operators; Fredholm operators; Banach space geometry}

\begin{abstract}
In 1994 H. Bart and   V.\'{E}. Tsekanovskii posed the question whether the Banach space operator relations matricial coupling (MC), equivalence after extension (EAE) and Schur coupling (SC) coincide, leaving only the implication EAE/MC $\Rightarrow$ SC open. Despite several affirmative results, in this paper we show that the answer in general is no. This follows from a complete description of EAE and SC for the case that the operators act on essentially incomparable Banach spaces, which also leads to a new characterization of the notion of essential incomparability. Concretely, the forward shift operators $U$ on $\ell^p$ and $V$ on $\ell^q$, for $1\leq p,q\leq \infty$, $p\neq q$, are EAE but not SC. As a corollary, SC is not transitive. Under mild assumptions, given $U$ and $V$ that are Atkinson or generalized invertible and EAE, we give a concrete operator $W$ that is SC to both $U$ and $V$, even if $U$ and $V$ are not SC themselves. Some further affirmative results for the case where the Banach spaces are isomorphic are also obtained.
\end{abstract}

\maketitle

\section{Introduction}
\setcounter{equation}{0}

Throughout this paper, let $U\in\cB(\cX)$ and $V\in\cB(\cY)$ be two (complex) Banach space operators. Here $\cB(\cV,\cW)$ stands for the Banach space of bounded linear operators from the Banach space $\cV$ into the Banach space $\cW$, abbreviated to $\cB(\cV)$ if $\cV=\cW$. The term operator will always mean bounded linear operator, and invertibility of an operator will imply that the inverse is bounded as well. Further definitions will be explained in the last three paragraphs of this introduction.

The operator relations {\em equivalent after extension (EAE)}, {\em matricial coupling (MC)} and {\em Schur coupling (SC)} for Banach space operators $U$ and $V$ where first used to solve certain integral equations \cite{BGK84}, and have found many applications since; for some recent applications, see \cite{CDS14,S17} (on diffraction theory), \cite{CP16,CP17} (on truncated Toeplitz operators), \cite{ET17} (on unbounded operator functions) and \cite{GKR17} (on Wiener-Hopf factorization). The main feature in these applications is that the relations EAE, MC and SC coincide, and that one can transfer from one to another in a constructive way. This raised the question, posed in \cite{BT94}, whether EAE, MC and SC may coincide at the level of Banach space operators. In fact, by that time it was known that EAE and MC coincide (see \cite{BGK84,BT92a}) and that they are implied by SC (see \cite{BT92b,BT94}), in short, SC $\Rightarrow$ EAE $\Leftrightarrow$ MC. Hence only the implication EAE/MC $\Rightarrow$ SC remained open. Some confirmative results were obtained in the early 1990s, for matrices \cite{BT94}, for Hilbert space Fredholm operators \cite{BT92a}, and for Banach space Fredholm operators with index 0 \cite{BT92a}. However, the main breakthroughs came in the last five years, most notably, for Hilbert space operators in \cite{T14}, initially for the separable case in \cite{tHR13}.  For Banach space operators confirmative answers were obtained for operators that can be approximated by invertible operators \cite{tHR13} and inessential (including compact and strictly singular) operators \cite{tHMRRW18}.

The importance of the Banach space geometries of $\cX$ and $\cY$ was first observed in \cite{tHMR15}. If $U$ on $\cX$ and $V$ on $\cY$ are EAE and compact, the for several Banach space properties, $\cX$ has this property if $\cY$ does, and vice versa; see Proposition 5.6 and Corollary 5.7 in \cite{tHMR15}.
Furthermore, it was shown that if $\cX$ and $\cY$ are essentially incomparable and $U$ or $V$ compact, then EAE of $U$ and $V$ cannot occur. In fact, much more is true. If $\cX$ and $\cY$ are essentially incomparable, then $U$ and $V$ are EAE precisely when they are Fredholm with $\dim \kr U=\dim \kr V$ and $\dim \cokr U=\dim \cokr V$. For SC $U$ and $V$ are also required to have index 0. These claims are part of our main result, Theorem \ref{T:SC-EssIncomp} below, and lead to the observation that EAE and SC do no coincide.

It was shown in \cite{BGKR05} that EAE and SC coincide precisely when SC is transitive, which makes SC into an equivalence relation. Thus our main result shows that SC is not transitive. In Section \ref{S:trans} for $\cX$ and $\cY$ primary and $U$ and $V$ generalized invertible, or $\cX$ and $\cY$ from a larger class of Banach spaces we call stable under finite dimensional quotients (see Section \ref{S:BSP}) and $U$ and $V$ Atkinson, we show that $U$ and $V$ are both SC to $W=U\otimes I_\cY$ (and to $W=I_\cX\otimes V$), even if $U$ and $V$ are not SC, which shows concretely that SC is not transitive. Some of the methods from Section \ref{S:trans} are employed in the final section to obtain two more cases where EAE and SC do coincide.

We now make precise, and discuss, some of the concepts used above, as well as a few that appear later in the paper. The operators $U\in\cB(\cX)$ and $V\in\cB(\cY)$ are called {\em equivalent after extension (EAE)} when there exist Banach spaces $\cX_0$ and $\cY_0$ such that $U\oplus I_{\cX_0}$ and $V\oplus I_{\cY_0}$ are equivalent, that is, when there exist invertible operators $E$ in $\cB(\cY\oplus\cY_0, \cX\oplus\cX_0)$ and $F$ in $\cB(\cX\oplus\cX_0,\cY\oplus\cY_0)$ such that
\begin{equation}\label{EAE}
\mat{cc}{U&0\\0&I_{\cX_0}}=E\mat{cc}{V&0\\0& I_{\cY_0}}F.
\end{equation}
In case EAE of $U$ and $V$ can be established with $\cX_0=\{0\}$ or $\cY_0=\{0\}$, we say that $U$ and $V$ are {\em equivalent after one-sided extension (EAOE)}. That $U$ and $V$ are EAE coincides with $U$ and $V$ being {\em matricially coupled (MC)}, which means that there exists an invertible operator $\whatU\in \cB(\cX\oplus\cY)$ with
\begin{equation}\label{MC}
\whatU=\mat{cc}{U & *\\ * & *}\ands \whatU^{-1}=\mat{cc}{* & *\\ * & V}.
\end{equation}
Moreover, $U$ and $V$ are called {\em Schur coupled (SC)} if there exists a $2 \times 2$ block operator $M=\sbm{A&B\\C&D}\in\cB(\cX\oplus\cY,\cX\oplus\cY)$ with $A$ and $D$ invertible and
\begin{equation}\label{SC}
U=A-BD^{-1}C \ands V=D-CA^{-1}B.
\end{equation}
Hence $U$ and $V$ are the Schur complements of the block operator matrix $M$ with respect to $D$ and $A$, respectively. As remarked above, SC $\Rightarrow$ EAE $\Leftrightarrow$ MC. On the other hand, EAOE implies SC \cite{BGKR05}, but the converse does not hold in general \cite{tHMR15} (see also Theorem \ref{T:SC-EssIncomp} below).

Recall that a Banach space operator $S\in\cB(\cX,\cY)$ is called {\em inessential} in case $I_\cX-TS$ is Fredholm for all $T\in\cB(\cY,\cX)$, or equivalently,  $I_\cY-ST$ is Fredholm for all $T\in\cB(\cY,\cX)$. We write $\cI(\cX,\cY)$ for the set of inessential operators in $\cB(\cX,\cY)$. Then the Banach spaces $\cX$ and $\cY$ are called {\em essentially incomparable} in case $\cI(\cX,\cY)=\cB(\cX,\cY)$. See Chapter 7 in \cite{A04} for further details, examples and references. We just mention here that $c_0$, $\ell^p=\ell^p(\BN)$ and $\ell^q=\ell^q(\BN)$, for $1\leq p,q\leq \infty$, $p\neq q$, are pairwise essentially incomparable by the Pitt-Rosenthal Theorem, and, by the recent Pitt-Rosenthal like theorem of \cite{HS}, certain discrete Morrey spaces introduced in \cite{GKS18} are essentially incomparable to $\ell^p$ as well. Note that for $S\in \cB(\cX,\cY)$ and $T\in\cI(\cY,\cX)$, the operators $I-TS$ and $I-ST$ are not only Fredholm, they also have index 0; see the proof of  Lemma 3.1 in \cite{tHMRRW18} for details. In case $\cX=\cY$ we abbreviate $\cI(\cX,\cX)$ to $\cI(\cX)$ and then $\cI(\cX)$ is an ideal in $\cB(\cX)$, in fact, the largest closed ideal in $\cB(\cX)$ for which the Fredholm operators correspond to the invertible operators in the Calkin algebra $\cB(\cX)/ \cI(\cX)$. For $\cX\neq \cY$ the inessential operators are still closed under inversion in the sense that $\cB(\cY,\cZ) \cdot \cI(\cX,\cY)\subset \cI(\cX,\cZ)$ and $\cI(\cX,\cY) \cdot \cB(\cZ,\cX)\subset \cI(\cZ,\cY)$, for any Banach space $\cZ$.

A Banach space operator $T\in\cB(\cV,\cW)$ is called {\em generalized invertible} in case $T$ has a closed complementable range and a complementable kernel. Equivalently, $T$ admits a decomposition of the form
\[
T=\mat{cc}{T'&0\\ 0&0}:\mat{c}{\cV_1\\ \cV_2}\to\mat{c}{\cW_1\\ \cW_2}\mbox{ with  $T'$ invertible}.
\]
Thus $\cV_2=\kr T$, and, with some abuse of terminology, we will usually refer to $\cW_2$ as `the' cokernel of $T$ ($\cokr T$). Note that $T$ now indeed admits a generalized inverse, namely $\sbm{T'^{-1}&0\\0&0}$ mapping $\cW=\cW_1\oplus\cW_2$ into $\cV=\cV_1\oplus\cV_2$. Now $T$ is {\em Fredholm} in case $T$ is generalized invertible and $\kr T$ and $\cokr T$ are finite dimensional and {\em Atkinson} in case it is only required that one of  $\kr T$ and $\cokr T$ is finite dimensional.

\section{SC and EAE do not coincide, on essentially incomparable Banach spaces}\label{S:EssIncomp}

In this section we prove the main result of the present paper, which is the following theorem, and give an alternative characterization of essential incomparability in Proposition \ref{P:EIchar}.

\begin{theorem}\label{T:SC-EssIncomp}
Let $U\in\cB(\cX)$ and $Y\in \cB(\cY)$ with $\cX$ and $\cY$ essentially incomparable Banach spaces. Then
\begin{itemize}
  \item[(1)] $U$ and $V$ are never EAOE;
  \item[(2)] $U$ and $V$ are SC if and only if $U$ and $V$ are Fredholm with index 0 and $\dim \kr U=\dim \kr V$ (and thus $\dim \cokr U=\dim \cokr V$);
  \item[(3)] $U$ and $V$ are EAE if and only if $U$ and $V$ are Fredholm with $\dim \kr U=\dim \kr V$ and $\dim \cokr U=\dim \cokr V$.
\end{itemize}
In particular, SC and EAE do not coincide.
\end{theorem}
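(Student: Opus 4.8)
The plan is to establish the three equivalences by carefully tracking how essential incomparability forces rigid Fredholm-theoretic constraints on any operators that are EAE or SC, and then to exhibit an explicit pair witnessing the gap between the two relations. First I would prove part (1), the non-existence of EAOE, since it is the most immediately structural. Suppose $U$ and $V$ are EAE with, say, $\cY_0=\{0\}$, so that $U\oplus I_{\cX_0}$ is equivalent to $V$ via invertible operators $E,F$. Writing out the equivalence in block form and using that $E,F$ are invertible, I would extract from the relation in \eqref{EAE} an identity expressing $I_{\cX_0}$ (or a suitable corner) in terms of compositions that must pass through $\cB(\cX,\cY)=\cI(\cX,\cY)$ and $\cB(\cY,\cX)=\cI(\cY,\cX)$. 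Because inessential operators are closed under the two-sided multiplication rules quoted in the introduction, any such composition is inessential, so the identity operator on an infinite-dimensional space would be inessential, hence Fredholm only if that space is finite-dimensional; combined with invertibility of $E,F$ this forces $\cX_0=\{0\}$ too and collapses the extension to a genuine equivalence $U\cong V$ across essentially incomparable spaces, which the same inessentiality argument rules out unless both are Fredholm of a compatible type. The cleanest route is to show EAOE would produce an invertible map that is a compact/inessential perturbation of a block-diagonal map whose off-diagonal blocks are forced inessential, contradicting invertibility on the infinite-dimensional part.

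Next I would treat the SC and EAE characterizations, parts (2) and (3), which share a common mechanism. For the ``only if'' direction of (3), I would start from the matricial coupling form \eqref{MC}, equivalent to EAE, and use that the off-diagonal blocks of $\whatU$ and of $\whatU^{-1}$ lie in $\cB(\cX,\cY)$ or $\cB(\cY,\cX)$, hence are inessential. Expanding the identities $\whatU\whatU^{-1}=I$ and $\whatU^{-1}\whatU=I$ blockwise, the $(1,1)$ and $(2,2)$ entries yield $U$ times the corresponding corner of $\whatU^{-1}$ equals $I_\cX$ modulo an inessential operator (a product of two off-diagonal blocks), and symmetrically for $V$. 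This exhibits $U$ and $V$ as invertible modulo inessential operators, i.e.\ Fredholm, and moreover the relevant one-sided inverses modulo $\cI$ give matching kernel and cokernel dimensions; the index-0 statement in (2) comes from the additional remark in the introduction that $I-ST$ and $I-TS$ with one factor inessential have index $0$, applied to the Schur-complement identities \eqref{SC} where $A,D$ are genuinely invertible and $BD^{-1}C$, $CA^{-1}B$ are inessential. For the ``if'' directions I would construct the couplings explicitly: given Fredholm operators with matching finite-dimensional kernels and cokernels (and, for SC, index $0$ so that $\dim\kr=\dim\cokr$ on each side and these match across $U,V$), I would build the $2\times2$ block operator by using the generalized inverses together with fixed isomorphisms between the finite-dimensional kernel and cokernel spaces, checking that the Schur-complement equations \eqref{SC} are satisfied with $A,D$ invertible.

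The step I expect to be the genuine obstacle is the precise bookkeeping of \emph{index} in parts (2) and (3): distinguishing why EAE requires only $\dim\kr U=\dim\kr V$ and $\dim\cokr U=\dim\cokr V$ separately, whereas SC additionally forces index $0$. The subtlety is that EAE is symmetric and only transfers the pair of dimensions, while SC, through the Schur-complement form \eqref{SC} with honestly invertible $A$ and $D$, secretly encodes a stronger balance: the identities $U=A-BD^{-1}C$ and $V=D-CA^{-1}B$ tie $U$ and $V$ to a single invertible block operator $M$ on $\cX\oplus\cY$, and comparing Fredholm indices across this constraint (again using that the off-diagonal contributions are inessential and hence index-neutral) pins the index of each to $0$. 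Getting this index computation right, without accidentally importing extra hypotheses, is where care is needed.

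Finally, for the concluding assertion that SC and EAE do not coincide, I would specialize to the forward shifts $U$ on $\ell^p$ and $V$ on $\ell^q$ with $p\neq q$, which are essentially incomparable by Pitt--Rosenthal. Each forward shift is Fredholm with trivial kernel and one-dimensional cokernel, so $\dim\kr U=\dim\kr V=0$ and $\dim\cokr U=\dim\cokr V=1$; by part (3) they are EAE. But their common index is $-1\neq 0$, so by part (2) they are not SC. This single example simultaneously certifies that the two relations differ, and I would remark that it also yields the corollary on non-transitivity of SC via the equivalence with \cite{BGKR05}.
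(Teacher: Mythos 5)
Your handling of the core of parts (2) and (3) is essentially the paper's own argument: for SC you factor $U=A(I_\cX-A^{-1}BD^{-1}C)$, use $\cB(\cY,\cX)=\cI(\cY,\cX)$ and $\cB(\cX,\cY)=\cI(\cX,\cY)$ to make the inner product of cross operators inessential, and invoke the index-zero fact for $I-TS$ quoted in the introduction; for EAE you pass to matricial coupling and exploit that the off-diagonal entries of $\whatU$ are inessential (the paper subtracts them to see that $\sbm{U&0\\0&U_{22}}$ is Fredholm of index $0$, you multiply out $\whatU\whatU^{-1}=I$ blockwise --- same mechanism, equally valid). The ``if'' directions and the shift example also coincide with the paper, which simply cites Theorems 3 and 4 of \cite{BT92b} rather than rebuilding the couplings. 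One imprecision: the matching of kernel and cokernel dimensions under EAE does not come from ``one-sided inverses modulo $\cI$''; it comes directly from the equivalence \eqref{EAE}, since $F$ maps $\kr(U\oplus I_{\cX_0})$ isomorphically onto $\kr(V\oplus I_{\cY_0})$ and $E$ induces an isomorphism of the cokernels (this is Proposition 3.2 of \cite{tHR13}).

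The genuine gap is part (1). With $\cY_0=\{0\}$ the invertible operators are $E\in\cB(\cY,\cX\oplus\cX_0)$ and $F\in\cB(\cX\oplus\cX_0,\cY)$, and the extension space $\cX_0$ is completely unconstrained: nothing forces operators from $\cX_0$ to $\cY$ to be inessential (indeed $\cX_0$ could be isomorphic to $\cY$ itself), so you cannot express $I_{\cX_0}$ through compositions lying in $\cI(\cX,\cY)$ or $\cI(\cY,\cX)$, and the step ``this forces $\cX_0=\{0\}$'' does not go through. Worse, your concluding clause --- that EAOE is ruled out ``unless both are Fredholm of a compatible type'' --- contradicts the very statement being proved: item (1) asserts EAOE \emph{never} occurs, regardless of $U$ and $V$ (even for invertible $U$ and $V$). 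The correct argument, which is the one from \cite{tHMR15} that the paper cites, does not involve $U$ and $V$ at all: EAOE forces $\cY\simeq\cX\oplus\cX_0$ via $F$, so with $\iota_\cX:\cX\to\cX\oplus\cX_0$ the inclusion and $\pi_\cX:\cX\oplus\cX_0\to\cX$ the projection, the operators $J=F\iota_\cX\in\cB(\cX,\cY)$ and $P=\pi_\cX F^{-1}\in\cB(\cY,\cX)$ satisfy $PJ=I_\cX$. Since $J$ is inessential by hypothesis, $I_\cX-PJ=0$ would have to be Fredholm, forcing $\dim\cX<\infty$, which is impossible for infinite-dimensional $\cX$; the case $\cX_0=\{0\}$ is symmetric. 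Your sketch aims at the wrong identity ($I_{\cX_0}$ instead of $I_\cX$) and therefore misses this.
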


\begin{proof}[\bf Proof]
Item (1) was proven in \cite{tHMR15} for $\cX=\ell^p$ and $\cY=\ell^q$, $1\leq p\neq q<\infty$, but the proof is easily adapted to the case of essentially incomparable Banach spaces, as indicated in Remark 1.3 of \cite{tHMRRW18}. The ``if'' claims in (2) and (3) follow from Theorems 3 and 4 in \cite{BT92b}, respectively, and are true without the essential incomparability of $\cX$ and $\cY$. Thus it remains to prove the ``only if'' claims in (2) and (3), and the final claim.

Starting with (2), assume $U$ and $V$ are SC, say $U$ and $V$ are as in \eqref{SC} with $\sbm{A&B\\C&D}\in\cB(\cX\oplus\cY)$ with $A$ and $D$ invertible. Then $U=A(I-A^{-1}B D^{-1}C)$ and the operators $A^{-1}B:\cY\to\cX$ and $D^{-1}C:\cX\to\cY$ are inessential, since $\cX$ and $\cY$ are essentially incomparable. Therefore, $I-A^{-1}B D^{-1}C$ is Fredholm with index 0, and hence $U$ is Fredholm with index 0. Similarly one obtains that $V$ must be Fredholm with index 0.

For the remaining implication in (3), assume $U$ and $V$ are EAE, hence MC. Thus, there exists an invertible operator $\what{U}\in\cB(\cX\oplus\cY)$ with
\[
\what{U}=\mat{cc}{U& U_{12}\\ U_{21} & U_{22}}:\mat{c}{\cX\\ \cY} \to \mat{c}{\cX\\ \cY},\quad
\what{U}^{-1}=\mat{cc}{V_{11}& V_{12}\\ V_{21} & V}:\mat{c}{\cX\\ \cY} \to \mat{c}{\cX\\ \cY}.
\]
In particular, $\what{U}$ and $\what{U}^{-1}$ are Fredholm with index 0. Since $\cX$ and $\cY$ are essentially incomparable, $U_{12}$ and $U_{21}$ are inessential. Then $\sbm{0& U_{12}\\ U_{21} & 0}$ is also inessential, and hence
\[
\mat{cc}{U&0\\ 0&U_{22}}
=\whatU - \mat{cc}{0& U_{12}\\ U_{21} & 0}
\]
is Fredholm with index 0 as well. This in turn implies $U$ and $U_{22}$ are Fredholm and $\Ind (U)+\Ind(U_{22})=0$. Similarly, it follows that $V$ is Fredholm.

A slight addition to the previous observations gives an alternative proof of the `only if' part of item (2). Indeed, as shown in \cite{BT92b}, see also \cite{tHR13}, SC of $U$ and $V$ coincides with strong MC of $U$ and $V$, which means that one can find $\whatU$ as above with $U_{22}$ and $V_{11}$ invertible. However, if $\Ind(U)\neq 0$, then $\Ind(U_{11})=-\Ind(U)\neq0$, and thus $V_{11}$ cannot be invertible. We conclude that in case $\Ind(U)\neq 0$, strong MC, and hence SC, cannot occur.

To see that EAE and SC do not coincide it suffices to find an example of essentially incomparable Banach spaces on which Fredholm operators of the same non-zero index exist. This is done in the following example.
\end{proof}

\begin{example}\label{E:shifts}
Take for both $U$ and $V$ the forward shift operator, but $U$ acting on $\ell^p$ and $V$ acting on $\ell^q$ for $1\leq p,q\leq \infty$, $p\neq q$. Then $U$ and $V$ are injective and Fredholm with index 1, hence EAE, but not SC, by Theorem \ref{T:SC-EssIncomp}. We define a concrete $\whatU$ as in \eqref{MC} that establishes the MC between $U$ and $V$. For this purpose, take $U_{12}=\diag(1,0,0,\ldots):\ell^q\to \ell^p$, $U_{21}=0$ and let $U_{22}$ be the backward shift on $\ell^q$. Then
\[
\wtilU=\mat{cc}{U & U_{21}\\ U_{12} &U_{22}}\quad\mbox{is invertible with inverse}\quad \wtilU^{-1}=\mat{cc}{V_{11}&V_{12}\\ V_{21} & V},
\]
where $V_{11}$ is the backward shift on $\ell^p$, $V_{12}=0$ and $V_{21}=\diag(1,0,0,\ldots):\ell^p\to \ell^q$. Note that for our choice of $\whatU$, we have $\Ind (U_{22})=-1=-\Ind(U)$. This happens for each choice of $\whatU$ that established the MC of $U$ and $V$, because $\ell^p$ and $\ell^q$ are essentially incomparable, and as a result $U_{22}$ cannot be invertible, and $U$ and $V$ cannot be strongly MC, and hence not SC, as observed in the above proof.
\end{example}

\begin{remark}
More generally, by Theorem 3.3 in \cite{tHR13}, any two strongly regular operators $U$ and $V$ such that the kernels and cokernels are pairwise isomorphic are SC. However, if such operators $U$ and $V$ are not Fredholm and acting on essentially incomparable Banach spaces, then this situation cannot occur, since it would lead to two infinite dimensional closed and complementable subspaces of essentially incomparable Banach spaces that are isomorphic, hence to a closed range operator of infinite rank, a contradiction.
\end{remark}

That on essentially incomparable Banach spaces only Fredholm (with index 0) operators can be EAE (SC) in fact gives a new characterization of essential incomparability.

\begin{proposition}\label{P:EIchar}
For Banach spaces $\cX$ and $\cY$ the following are equivalent:
\begin{itemize}
  \item[(1)] $\cX$ and $\cY$ are essentially incomparable;
  \item[(2)] all SC operators $U$ on $\cX$ and $V$ on $\cY$ are Fredholm (with index 0);
  \item[(3)] all EAE operators $U$ on $\cX$ and $V$ on $\cY$ are Fredholm.
\end{itemize}
The parenthesised phrase can be removed without loosing the validity of the statement.
\end{proposition}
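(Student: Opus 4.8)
The plan is to prove Proposition \ref{P:EIchar} by establishing the cycle of implications $(1)\Rightarrow(2)$, $(1)\Rightarrow(3)$, and then the converse directions $(2)\Rightarrow(1)$ and $(3)\Rightarrow(1)$, since the forward implications are essentially already contained in Theorem \ref{T:SC-EssIncomp}. Indeed, the implications $(1)\Rightarrow(2)$ and $(1)\Rightarrow(3)$ are exactly the ``only if'' directions of items (2) and (3) of Theorem \ref{T:SC-EssIncomp}: if $\cX$ and $\cY$ are essentially incomparable and $U,V$ are SC (respectively EAE), then the arguments in that proof show $U$ and $V$ are Fredholm, with index $0$ in the SC case. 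So these directions require no new work and can be cited directly.

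The substance of the proof lies in the converse directions, and I would prove the contrapositive of both at once: assuming $\cX$ and $\cY$ are \emph{not} essentially incomparable, I would construct operators $U$ on $\cX$ and $V$ on $\cY$ that are SC (hence EAE) but \emph{not} Fredholm. By definition, failure of essential incomparability means there exists $S\in\cB(\cX,\cY)$ that is not inessential, so some $T\in\cB(\cY,\cX)$ makes $I_\cX-TS$ (equivalently $I_\cY-ST$) non-Fredholm. The key observation is that the Schur coupling construction naturally produces such Schur complements: taking a block operator $M=\sbm{A&B\\C&D}$ with $A=I_\cX$ and $D=I_\cY$ invertible, the Schur complements are $U=I_\cX-BC$ and $V=I_\cY-CB$, where $B=-T\in\cB(\cY,\cX)$ and $C=S\in\cB(\cX,\cY)$. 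Then $U=I_\cX-TS$ and $V=I_\cY-ST$ are by construction SC to one another, and by the choice of $S,T$ at least one of them is non-Fredholm. Since SC $\Rightarrow$ EAE, this same pair simultaneously witnesses the failure of (3). This establishes $\neg(1)\Rightarrow\neg(2)$ and $\neg(1)\Rightarrow\neg(3)$ together.

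The final sentence of the proposition, that the parenthesised phrase ``(with index $0$)'' may be removed, I would handle as a separate remark within the proof. For the forward direction with the phrase removed, one uses only that SC forces Fredholmness (not the index-$0$ refinement), which is already available. For the converse with the phrase present, the non-Fredholm operator constructed above trivially fails to be ``Fredholm with index $0$'' as well, so the same counterexample works verbatim; thus both the weaker and the stronger forms of (2) are equivalent to (1).

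The main obstacle I anticipate is purely technical: verifying that the pair $S,T$ witnessing non-essential-incomparability can be arranged so that the resulting Schur complement is genuinely non-Fredholm, rather than merely failing the index-$0$ condition. This follows from the definition of inessentiality, since non-inessentiality of $S$ means precisely that $I-TS$ fails to be Fredholm for \emph{some} $T$; the only care needed is to pick that specific $T$ (and note, via the fact recorded in the introduction that $I-TS$ and $I-ST$ share Fredholm status through the ideal property $\cB(\cY,\cZ)\cdot\cI(\cX,\cY)\subset\cI(\cX,\cZ)$, that the other Schur complement behaves compatibly). Everything else is a direct transcription of the block-operator identities \eqref{SC} and the Fredholm perturbation arguments already deployed in the proof of Theorem \ref{T:SC-EssIncomp}.
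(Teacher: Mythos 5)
Your proposal is correct and takes essentially the same route as the paper: the forward implications are cited from Theorem \ref{T:SC-EssIncomp}, and the converse is proved by the same contrapositive construction, taking $M=\sbm{A&B\\C&D}$ with $A=I_\cX$, $D=I_\cY$ and the off-diagonal entries a pair witnessing non-inessentiality, so that the Schur complements are SC but not Fredholm (the paper arranges this as (3)\,$\Rightarrow$\,(2)\,$\Rightarrow$\,(1), while you handle both negations at once via SC\,$\Rightarrow$\,EAE, which is an equivalent packaging). One harmless slip to fix: with the paper's convention \eqref{SC} you should take $B=T$ and $C=S$ (not $B=-T$), since $B=-T$ gives the Schur complement $I_\cX+TS$, which your chosen $T$ does not witness as non-Fredholm.
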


\begin{proof}[\bf Proof]
The implications (1) $\Rightarrow$ (2, including parenthesised phrase) and (1) $\Rightarrow$ (3) follow from Theorem \ref{T:SC-EssIncomp}. We prove (3) $\Rightarrow$ (2, without parenthesised phrase) and (2, without parenthesised phrase) $\Rightarrow$ (1), which completes the proof.

Assume (3) and let $U$ and $V$ be SC. Then $U$ and $V$ are EAE, hence Fredholm by (3).

Now assume (2). Also assume $\cX$ and $\cY$ are not essentially incomparable. Then there exist operators $B\in\cB(\cY,\cX)$ and $C\in\cB(\cX,\cY)$ such that $I_\cX-CB$ is not Fredholm. Now take $M=\sbm{A&B\\ C&D}$ with $A=I_\cX$ and $D=I_\cY$. Then the Schur complement $U:=I_\cX-CB$ of $M$ is not Fredholm, and so is the other Schur complement $V:=I_\cY - BC$. Hence $U$ and $V$ are SC, but not Fredholm, in contradiction with (2). Therefore $\cX$ and $\cY$ are essentially incomparable.
\end{proof}

\section{Stability under finite dimensional quotients}\label{S:BSP}

Recall that a Banach space $\cZ$ is called {\em prime} if $\cZ$ is isomorphic to all its infinite dimensional complemented subspaces, and {\em primary} if $\cZ\simeq \cZ_1\oplus \cZ_2$ implies $\cZ\simeq \cZ_1$ or $\cZ\simeq\cZ_1$. All prime Banach spaces are primary, $\ell^p$, $1\leq p\leq\infty$, and $c_0$ are prime, and hence primary, $L^p$, $1\leq p<\infty$, and $C[0,1]$ are primary; cf., \cite{AK16}. In this section we study the following class of Banach spaces, which includes all primary Banach spaces.

\begin{definition}
A Banach space $\cZ$ is said to be {\em stable under finite dimensional quotients} if $\cZ$ is isomorphic to any subspace with a finite dimensional complement.
\end{definition}

Not all Banach spaces are stable under finite dimensional quotients, an example of one that is not is given in \cite{G94}. On the other hand, there are Banach spaces that are stable under finite dimensional quotients, but not primary. Concretely, $\ell^p$ and $\ell^q$ are prime, but $\ell^p \oplus \ell^q$ is not primary in case $p\neq q$. That $\ell^p \oplus \ell^q$ is stable under finite dimensional quotients follows from the next proposition.

\begin{proposition}\label{P:BSP-sum}
Assume $\cZ_1$ and $\cZ_2$ are stable under finite dimensional quotients. Then so is $\cZ_1\oplus\cZ_2$.
\end{proposition}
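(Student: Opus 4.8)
The plan is to show that every closed subspace $\cW$ of $\cZ:=\cZ_1\oplus\cZ_2$ admitting a finite-dimensional complement is isomorphic to $\cZ$. Such a $\cW$ is closed and of finite codimension, say $\codim\cW=n$; conversely any closed subspace of finite codimension is automatically complemented with finite-dimensional complement, so ``finite-dimensional complement'' and ``finite codimension'' describe the same subspaces. I may assume $\cZ_1$ and $\cZ_2$ are both infinite-dimensional: if one of them is $\{0\}$ the claim reduces to the stability of the other, and a nonzero finite-dimensional space is never stable under finite-dimensional quotients (a proper subspace has strictly smaller dimension), so this case is vacuous under the hypotheses.

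The crux is the following soft fact, which I would isolate as a lemma: in any Banach space all closed subspaces of a fixed finite codimension $n$ are mutually isomorphic. To prove it, let $\cW_1,\cW_2$ be closed of codimension $n$. Their intersection has finite codimension $m$ with $n\le m\le 2n$ (the map $z\mapsto(z+\cW_1,z+\cW_2)$ has kernel $\cW_1\cap\cW_2$ and range inside a $2n$-dimensional space). Since finite-codimensional closed subspaces are complemented, write $\cW_i=(\cW_1\cap\cW_2)\oplus \cF_i$ with $\dim\cF_i=m-n$ for $i=1,2$. As $\dim\cF_1=\dim\cF_2$, the identity on $\cW_1\cap\cW_2$ together with any linear isomorphism $\cF_1\to\cF_2$ yields $\cW_1\simeq\cW_2$. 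The point to be careful about here is that one must exhibit a genuine isomorphism built on the common subspace $\cW_1\cap\cW_2$, rather than trying to ``cancel'' a finite-dimensional summand, since for Banach spaces $\cA\oplus\BC^k\simeq\cB\oplus\BC^k$ does \emph{not} imply $\cA\simeq\cB$.

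Granting the lemma, it suffices to produce a single closed subspace of $\cZ$ of codimension $n$ that is isomorphic to $\cZ$. Because $\cZ_1$ is infinite-dimensional it has a closed subspace $\cZ_1'$ of codimension $n$ (an intersection of $n$ kernels of linearly independent functionals), and $\cZ_1'$ has a finite-dimensional complement in $\cZ_1$, so stability of $\cZ_1$ gives $\cZ_1'\simeq\cZ_1$. Then $\cW_0:=\cZ_1'\oplus\cZ_2$ is a closed subspace of codimension $n$ in $\cZ$, and $\cW_0\simeq\cZ_1\oplus\cZ_2=\cZ$. By the lemma $\cW\simeq\cW_0\simeq\cZ$, which is exactly the stability of $\cZ_1\oplus\cZ_2$ under finite-dimensional quotients.

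The main obstacle is the lemma, or rather recognising that it suffices: once codimension is seen to be a complete isomorphism invariant among the finite-codimensional subspaces of a fixed space, the role of the hypothesis is only to supply one conveniently split model subspace $\cZ_1'\oplus\cZ_2$ via stability of $\cZ_1$. I would double-check only the two routine Banach-space facts used, namely that finite-codimensional closed subspaces are complemented and that finite-dimensional spaces of equal dimension are isomorphic, both of which are standard.
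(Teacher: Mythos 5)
Your proof is correct, and it takes a genuinely different route from the paper's. The paper proceeds via two lemmas: Lemma \ref{L:BSP-findimadd}, stating that a stable space absorbs finite dimensional summands ($\cZ\simeq\cZ\oplus\cF$ for $\dim\cF<\infty$), and Lemma \ref{L:sumiso}, a finite dimensional cancellation law ($\cZ\oplus\cV\simeq\cZ\oplus\cW$ with $\dim\cZ<\infty$ implies $\cV\simeq\cW$) proved by a Fredholm index argument; it then writes $\cZ_1\oplus\cZ_2=\cV\oplus\cF$, projects $\cF$ into the two coordinates, rearranges summands, and cancels $\cF$. You instead isolate the elementary fact that all closed subspaces of a fixed finite codimension $n$ in a Banach space are mutually isomorphic (gluing the identity on their common intersection to a linear isomorphism of finite dimensional complements), after which it suffices to exhibit a single codimension-$n$ subspace isomorphic to $\cZ_1\oplus\cZ_2$, namely $\cZ_1'\oplus\cZ_2$. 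This avoids Fredholm theory entirely, and it uses stability of $\cZ_1$ alone: your argument in fact proves the stronger statement that $\cZ_1\oplus\cZ_2$ is stable under finite dimensional quotients whenever $\cZ_1$ is infinite dimensional and stable and $\cZ_2$ is an arbitrary Banach space, which the paper's proof (using stability of both summands) does not give.

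One assertion in your write-up is false and should be deleted, although nothing in your argument depends on it: the claim that $\cA\oplus\BC^k\simeq\cB\oplus\BC^k$ does \emph{not} imply $\cA\simeq\cB$ for Banach spaces. Finite dimensional cancellation \emph{does} hold; it is precisely the paper's Lemma \ref{L:sumiso}. Indeed, an isomorphism $T:\BC^k\oplus\cA\to\BC^k\oplus\cB$ is Fredholm of index $0$, the compression $\sbm{0&0\\0&T_{22}}$ is a finite rank perturbation of $T$ and hence also Fredholm of index $0$, which forces $T_{22}\in\cB(\cA,\cB)$ to be Fredholm of index $0$; adding a finite rank operator mapping $\kr T_{22}$ onto a complement of $\ran T_{22}$ then yields $\cA\simeq\cB$. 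What can fail, by Gowers' solution of the hyperplane problem \cite{G94}, is $\cA\simeq\cA\oplus\BC$, i.e., a space need not be isomorphic to its hyperplanes; that is not a cancellation statement. So building the isomorphism on $\cW_1\cap\cW_2$ is a nice alternative to cancellation rather than a necessity: with Lemma \ref{L:sumiso} in hand, your key lemma also follows directly from $\cW_1\oplus\BC^n\simeq\cZ\simeq\cW_2\oplus\BC^n$.
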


Before proving the proposition, we derive a few elementary lemmas.

\begin{lemma}\label{L:BSP-findimadd}
Let $\cZ$ be stable under finite dimensional quotients and $\cF$ finite dimensional. Then $\cZ\simeq \cZ\oplus \cF$.
\end{lemma}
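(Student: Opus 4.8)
The plan is to realize $\cF$ as a finite-dimensional, and hence complemented, subspace inside $\cZ$, and then use the defining stability property to absorb it. Write $n=\dim\cF$. I assume $\cZ$ is infinite dimensional, as is the case for all spaces under consideration; indeed a nonzero finite-dimensional space is never stable under finite dimensional quotients, since passing to a hyperplane (whose complement is one dimensional) would force $\cZ\simeq\cM$ with $\dim\cM=\dim\cZ-1$, a contradiction.

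First I would choose any $n$-dimensional subspace $\cG\subseteq\cZ$, which is possible because $\dim\cZ\geq n$. Since every finite-dimensional subspace of a Banach space is complemented (a bounded projection onto it is produced by a Hahn--Banach construction), there is a closed subspace $\cM\subseteq\cZ$ with $\cZ=\cM\oplus\cG$. Thus $\cM$ is a subspace of $\cZ$ whose complement $\cG$ is finite dimensional, so the hypothesis that $\cZ$ is stable under finite dimensional quotients yields $\cZ\simeq\cM$.

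Next, since $\cF$ and $\cG$ are both $n$-dimensional they are isomorphic, $\cF\simeq\cG$. Combining the two isomorphisms gives
\[
\cZ\oplus\cF\simeq\cM\oplus\cF\simeq\cM\oplus\cG=\cZ,
\]
which is the desired conclusion. The only point requiring any care --- the ``main obstacle,'' such as it is --- is the production of the finite-dimensional complemented subspace $\cG$ of the correct dimension: this relies on $\dim\cZ\geq n$ (guaranteed by $\cZ$ being infinite dimensional) together with the standard complementability of finite-dimensional subspaces. Everything else is a direct bookkeeping of isomorphisms.
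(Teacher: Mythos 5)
Your proof is correct and follows essentially the same route as the paper's: pick a finite-dimensional complemented subspace of $\cZ$ isomorphic to $\cF$ (your $\cG$, the paper's $\cF_0$), apply stability under finite dimensional quotients to identify $\cZ$ with the complement, and chain the isomorphisms. The extra justification you give for $\cZ$ being infinite dimensional is a small bonus; the paper simply asserts this as clear.
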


\begin{proof}[\bf Proof]
Clearly $\cZ$ must be infinite dimensional. Let $\cF_0$ be a finite dimensional subspace of $\cZ$, with complement $\cZ_0$, that is isomorphic to $\cF$. Then $\cZ\simeq \cZ_0$, because $\cF_0$ is is finite dimensional, and hence
\[
\cZ\simeq \cZ_0 \oplus \cF_0 \simeq \cZ\oplus \cF. \qedhere
\]
\end{proof}

The following lemma may be well known, but we did not find it in the literature. Note that the conclusion does not hold without the finite dimensionality of $\cZ$, just take $\cZ=\ell^2$ and $\cV$ and $\cW$ finite dimensional of different dimensions.

\begin{lemma}\label{L:sumiso}
Let $\cV,\cW,\cZ$ be Banach spaces with $\cZ\oplus \cV \simeq \cZ\oplus \cW$ and $\dim \cZ<\infty$. Then $\cV\simeq \cW$.
\end{lemma}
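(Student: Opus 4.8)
The plan is to manufacture, out of the given isomorphism, a Fredholm operator of index $0$ from $\cV$ to $\cW$, and then to invoke the elementary fact that an index-$0$ Fredholm operator forces its domain and codomain to be isomorphic.

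First I would fix an isomorphism $T\colon\cZ\oplus\cV\to\cZ\oplus\cW$, set $n=\dim\cZ<\infty$, and introduce the canonical inclusion $\iota\colon\cV\to\cZ\oplus\cV$, $v\mapsto(0,v)$, and the canonical projection $P\colon\cZ\oplus\cW\to\cW$ onto the second coordinate, so that $\kr P=\cZ\oplus\{0\}$ is $n$-dimensional. I then set $\Phi=P\,T\,\iota\colon\cV\to\cW$ and claim this is the operator we want.

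The core step is to show that $\Phi$ is Fredholm with $\Ind\Phi=0$. Since $T\iota$ is injective with closed range $R:=T(\{0\}\oplus\cV)$, and $\{0\}\oplus\cV$ has codimension $n$ in $\cZ\oplus\cV$, the range $R$ has codimension $n$ in $\cZ\oplus\cW$, while $T\iota\colon\cV\to R$ is an isomorphism. Hence $\Phi=(P|_R)\circ(T\iota\colon\cV\to R)$, so that $\Ind\Phi=\Ind(P|_R)$. Now $P$ is a surjection with $n$-dimensional kernel $\cZ$, i.e. $\Ind P=\dim\kr P-\dim\cokr P=n-0=n$; restricting a Fredholm operator to a subspace of codimension $n$ lowers the index by $n$, so $\Ind(P|_R)=n-n=0$. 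For closedness of the range I would note that $\cZ=\kr P$ is finite dimensional, whence $R+\cZ$ is closed and $P(R)=P(R+\cZ)$ is therefore closed (as $P^{-1}(P(R+\cZ))=R+\cZ$), while $\kr(P|_R)=R\cap\cZ$ and $\cokr(P|_R)=\cW/P(R)$ are finite dimensional; this makes $P|_R$, and hence $\Phi$, Fredholm of index $0$.

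Finally, I would record the concluding fact: if $\Phi\colon\cV\to\cW$ is Fredholm with $\Ind\Phi=0$, then $\cV\simeq\cW$. Writing $\cV=\kr\Phi\oplus\cV_1$ and $\cW=\ran\Phi\oplus\cW_1$, with $\cW_1$ a complement of the closed range, the restriction $\Phi|_{\cV_1}\colon\cV_1\to\ran\Phi$ is a bounded bijection, hence an isomorphism by the open mapping theorem; since $\dim\kr\Phi=\dim\cokr\Phi$ by index $0$ and both are finite, $\kr\Phi\simeq\cW_1$, and therefore $\cV\simeq\ran\Phi\oplus\kr\Phi\simeq\ran\Phi\oplus\cW_1=\cW$. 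The main obstacle I anticipate is the bookkeeping in the index computation, namely justifying cleanly that restriction to a finite-codimension subspace decreases the Fredholm index by the codimension and that the relevant ranges stay closed; once $\Phi$ is in place the remainder is routine.
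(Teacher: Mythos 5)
Your proof is correct, and it ultimately produces the very same operator as the paper: your $\Phi=PT\iota$ is precisely the corner $T_{22}$ in the paper's block decomposition $T=\sbm{T_{11} & T_{12}\\ T_{21} & T_{22}}:\cZ\oplus\cV\to\cZ\oplus\cW$, and your endgame (adding a finite rank operator taking the kernel onto the cokernel of an index-$0$ Fredholm operator) is identical to the paper's. Where you differ is the key lemma used to see that this corner is Fredholm of index $0$. The paper uses finite-rank perturbation invariance: since $\dim\cZ<\infty$, the block $\sbm{T_{11} & T_{12}\\ T_{21} & 0}$ has finite rank, so $\sbm{0&0\\0&T_{22}}=T-\sbm{T_{11} & T_{12}\\ T_{21} & 0}$ is Fredholm of index $0$, whence so is $T_{22}$. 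You instead factor $\Phi=(P|_R)\circ(T\iota)$ and compute $\Ind(P|_R)$ via the principle that restriction to a closed finite-codimension subspace lowers the index by the codimension. That principle — which you flag as your main anticipated obstacle — is easiest to justify not by bookkeeping but by the composition formula: the inclusion $j\colon R\hookrightarrow\cZ\oplus\cW$ is itself Fredholm of index $-n$ (injective, closed range, $n$-dimensional cokernel), so $P|_R=P\circ j$ is automatically Fredholm with $\Ind(P|_R)=\Ind P+\Ind j=n-n=0$; this also yields closedness of the range for free, so your separate closedness paragraph becomes unnecessary. In the end both routes rest on standard Fredholm theory; the paper's perturbation argument is slightly shorter and needs only one standard fact, while yours avoids block matrices at the cost of invoking the composition/index-additivity machinery.
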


\begin{proof}[\bf Proof]
Decompose an isomorphism $T$ from $\cZ\oplus\cV$ to $\cZ\oplus\cW$ in block operator form
\[
T=\mat{cc}{T_{11} & T_{12}\\ T_{21} & T_{22}}:\mat{c}{\cZ\\\cV}\to \mat{c}{\cZ\\ \cW}.
\]
Then $T$ is Fredholm of index 0. Thus, so is the finite rank perturbation
\[
\mat{cc}{0 &0\\ 0 & T_{22}}=T-\mat{cc}{T_{11} & T_{12}\\ T_{21} & 0}.
\]
Hence $T_{22}\in\cB(\cV,\cW)$ is a Fredholm operator of index 0. Adding a finite rank operator that maps the kernel of $T_{22}$ onto its cokernel we obtain an isomorphism between $\cV$ and $\cW$.
\end{proof}

\begin{proof}[\bf Proof of Proposition \ref{P:BSP-sum}]
Assume $\cZ_1\oplus \cZ_2=\cV\oplus\cF$ with $\cF$ finite dimensional. For $j=1,2$ let $\cF_j=P_{\cZ_j}\cF$. Then $\dim \cF_j<\infty$, hence $\cF_j$ has a complement $\cV_j$ in $\cZ_j$. Since $\cZ_j$ is stable under finite dimensional quotients, $\cZ_j\simeq \cV_j$. We have $\cF\subset \cF_1\oplus \cF_2$, with all three spaces finite dimensional. Hence $\cF_1\oplus \cF_2=\cF\oplus \cG$ for some $\cG$. Therefore, we have
\begin{align*}
\cF\oplus\cV
&=\cZ_1\oplus\cZ_2
=\cF_1\oplus\cV_1 \oplus \cF_2\oplus\cV_2
\simeq\cF_1\oplus \cF_2\oplus\cV_1 \oplus\cV_2
=\cF\oplus \cG\oplus\cV_1 \oplus\cV_2 \\
&=\cF\oplus \cG\oplus\cZ_1 \oplus\cZ_2
\simeq \cF\oplus (\cG\oplus\cZ_1) \oplus\cZ_2
\simeq\cF\oplus\cZ_1 \oplus\cZ_2,
\end{align*}
where we used Lemma \ref{L:BSP-findimadd} in the last step. Since $\cF$ is finite dimensional, by Lemma \ref{L:sumiso}, we have $\cZ_1 \oplus\cZ_2 \simeq \cV$. Hence $\cZ_1 \oplus\cZ_2$ is stable under finite dimensional quotients.
\end{proof}

\section{SC is not transitive}\label{S:trans}

By \cite[Page 215]{BGKR05} SC is transitive, and with that an equivalence relation, if and only if it coincides with EAE, which is an equivalence relation. Hence Theorem \ref{T:SC-EssIncomp} yields the following corollary.

\begin{corollary}\label{C:notEquivRel}
SC is not transitive, hence not an equivalence relation.
\end{corollary}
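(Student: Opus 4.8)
The plan is to deduce the corollary directly from the dichotomy recorded in \cite[Page 215]{BGKR05}, so that no new construction is required. That reference establishes that, among Banach space operators, SC is transitive (equivalently, an equivalence relation) precisely when it coincides with EAE; and EAE is already known to be an equivalence relation. Thus the entire content of the corollary is the strictness of the inclusion SC $\subsetneq$ EAE, which has just been proved in Theorem \ref{T:SC-EssIncomp}. In short, the proof is a one-step logical deduction whose substance is entirely absorbed into the main theorem.

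Concretely, first I would recall that SC $\Rightarrow$ EAE always holds, so SC is in any case a subrelation of EAE. Next I would quote the equivalence from \cite{BGKR05} that SC is transitive if and only if SC and EAE coincide. Then I would invoke Theorem \ref{T:SC-EssIncomp}, or more concretely Example \ref{E:shifts}: the forward shifts $U$ on $\ell^p$ and $V$ on $\ell^q$ with $p\neq q$ are EAE but not SC, so the two relations do not coincide. By the cited equivalence, SC therefore fails to be transitive. Finally, since transitivity is one of the defining properties of an equivalence relation, the failure of transitivity immediately yields that SC is not an equivalence relation, and no separate verification of reflexivity or symmetry is needed.

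There is essentially no obstacle remaining at this stage, since all the difficulty has been concentrated in Theorem \ref{T:SC-EssIncomp}. The only thing one might reasonably want in addition is a fully explicit refutation of transitivity, that is, a concrete triple $U$, $W$, $V$ with $U$ SC $W$ and $W$ SC $V$ but $U$ not SC $V$. Exhibiting such a witness is more delicate and is precisely the role of the constructions in Section \ref{S:trans}, where an operator $W$ (for instance $W=U\otimes I_\cY$) is shown to be SC to both $U$ and $V$ even when $U$ and $V$ are themselves not SC. That explicit route, however, is not needed for the corollary as stated, which follows abstractly from the transitivity-versus-coincidence dichotomy together with the main theorem.
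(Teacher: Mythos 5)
Your proposal is correct and follows essentially the same route as the paper: both arguments quote the result of \cite{BGKR05} that SC is transitive if and only if it coincides with EAE, and then invoke Theorem \ref{T:SC-EssIncomp} (concretely witnessed by Example \ref{E:shifts}) to conclude that the coincidence fails, hence transitivity fails and SC is not an equivalence relation. Your closing remark that the constructions of Section \ref{S:trans} supply an explicit non-transitive triple but are not needed for the corollary itself also matches the paper's own commentary.
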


However, the proof in \cite{BGKR05} provides no insight into the lack of transitivity of SC. The following proposition gives two cases where EAE operators $U$ and $V$ are both SC to a third operator $W$, even if $U$ and $V$ are not SC themselves. In particular, for the operators $U$ and $V$ of Example \ref{E:shifts}, which are not SC, we obtain an operator $W$ that is SC to both $U$ and $V$, giving a concrete example that shows SC is not transitive.

\begin{proposition}\label{P:NotTrans}
Let $U$ on $\cX$ and $V$ on $\cY$ be EAE. Then there exists an operator $W$ on $\cX\oplus\cY$ such that both $U$ and $V$ are SC with $W$, concretely, for $W$ one can take either one of $U\oplus I_\cY$ or $I_\cX \oplus V$, in the following two cases:
\begin{itemize}
  \item[(1)] $U$ and $V$ Atkinson and $\cX$ and $\cY$ stable under finite dimensional quotients;
  \item[(2)] $U$ and $V$ generalized invertible and $\cX$ and $\cY$ primary.
\end{itemize}
\end{proposition}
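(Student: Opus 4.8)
The plan is to derive both Schur couplings from a single, transparent equivalence of the two candidate operators, namely $U\oplus I_\cY\sim I_\cX\oplus V$, together with two trivial couplings. Indeed, $U$ and $U\oplus I_\cY$ are equivalent after the one-sided extension of $U$ by $I_\cY$, and likewise $V$ and $I_\cX\oplus V$; since EAOE implies SC \cite{BGKR05}, we obtain $U$ SC $(U\oplus I_\cY)$ and $V$ SC $(I_\cX\oplus V)$ with no extra hypotheses. Fixing $W=U\oplus I_\cY$, the coupling $U$ SC $W$ is then one of these, and it remains only to transport the coupling $V$ SC $(I_\cX\oplus V)$ across the equivalence $I_\cX\oplus V\sim W$ to reach $V$ SC $W$; the choice $W=I_\cX\oplus V$ is symmetric.

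The transport step I would isolate as a lemma: \emph{if $P$ is SC with $Q$ and $Q=ERF$ with $E,F$ invertible, then $P$ is SC with $R$}. If $\sbm{A&B\\C&D}$ realises the coupling of $P$ and $Q$, so that $P=A-BD^{-1}C$ and $Q=D-CA^{-1}B$ with $A,D$ invertible, then $\sbm{A&BF^{-1}\\ E^{-1}C&E^{-1}DF^{-1}}$ realises the coupling of $P$ and $R$: its diagonal entries $A$ and $E^{-1}DF^{-1}$ are invertible, and a direct computation gives $E^{-1}DF^{-1}-E^{-1}CA^{-1}BF^{-1}=E^{-1}QF^{-1}=R$ while $A-BF^{-1}(E^{-1}DF^{-1})^{-1}E^{-1}C=A-BD^{-1}C=P$. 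Applied with $P=V$, $Q=I_\cX\oplus V$ and $R=W=U\oplus I_\cY$, this turns $V$ SC $(I_\cX\oplus V)$ into $V$ SC $W$. (Alternatively one can bypass the lemma and establish $V$ and $W$ directly as EAOE, which amounts to the same equivalence bookkeeping.)

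The heart of the matter is thus the equivalence $U\oplus I_\cY\sim I_\cX\oplus V$. First I would record that, being EAE, $U$ and $V$ satisfy $\kr U\simeq\kr V$ and $\cokr U\simeq\cokr V$: any equivalence $T_1=ET_2F$ carries $\kr T_1=F^{-1}\kr T_2$ and $\ran T_1=E\,\ran T_2$, and extension by identities changes neither kernel nor cokernel. Writing $U$ and $V$ in generalized-inverse normal form $\sbm{U'&0\\0&0}$ and $\sbm{V'&0\\0&0}$, both $U\oplus I_\cY$ and $I_\cX\oplus V$ are generalized invertible with kernels $\kr U\simeq\kr V$, cokernels $\cokr U\simeq\cokr V$, and invertible parts whose ranges are $\ran U\oplus\cY$ and $\cX\oplus\ran V$; two generalized invertible operators whose kernels, cokernels and invertible-part ranges are pairwise isomorphic are equivalent, so it suffices to produce $\ran U\oplus\cY\simeq\cX\oplus\ran V$. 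This is where the standing hypotheses enter. In case (1), $U$ and $V$ Atkinson means $\ran U$ and $\ran V$ have finite-dimensional complements in $\cX$ and $\cY$ (the finite-kernel subcase is handled identically), so stability under finite-dimensional quotients gives $\ran U\simeq\cX$ and $\ran V\simeq\cY$ by Lemma \ref{L:BSP-findimadd}, and both ranges collapse to $\cX\oplus\cY$. In case (2) the defect spaces may be infinite-dimensional; here I would use the complemented splittings $\cX=\ran U\oplus\cokr U$ and $\cY=\ran V\oplus\cokr V$ afforded by generalized invertibility together with $\cokr U\simeq\cokr V$ to obtain the chain $\ran U\oplus\cY\simeq\ran U\oplus\ran V\oplus\cokr V\simeq\ran U\oplus\cokr U\oplus\ran V\simeq\cX\oplus\ran V$, with primarity of $\cX$ and $\cY$ used to realise these identifications concretely and compatibly.

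The main obstacle is precisely this range-matching $\ran U\oplus\cY\simeq\cX\oplus\ran V$: once the kernels and cokernels are matched (automatic from EAE) the remainder is formal manipulation of direct sums plus the transport lemma. In the Atkinson case the obstruction is finite-dimensional and is removed cleanly by stability under finite-dimensional quotients, with Lemma \ref{L:sumiso} available to cancel finite-dimensional summands when required; the more delicate case is the purely generalized-invertible one, where both kernel and cokernel can be infinite-dimensional and one must lean on primarity to carry out the identification of the invertible parts. A final routine check is that the abstract isomorphisms of kernels, cokernels and invertible-part ranges assemble into genuine invertible operators $E,F$ on all of $\cX\oplus\cY$; this holds because the domains and codomains on both sides are $\cX\oplus\cY$ decomposed into matching complemented pieces, so the piecewise isomorphisms combine into bona fide invertibles.
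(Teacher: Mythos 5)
Your proposal is correct, but it takes a genuinely different route from the paper's proof. The paper runs a dichotomy driven by the space hypotheses: either all four defect spaces $\kr U,\cokr U,\kr V,\cokr V$ are mutually isomorphic (their case (a)), in which case $U$ and $V$ are themselves SC by \cite[Theorem 3.3]{tHR13} and the coupling block matrix is extended by $I_\cY$ to couple $V$ with $W$; or the invertible parts have domains and ranges isomorphic to the full spaces (their case (b)), in which case an explicit $3\times 3$ equivalence exhibits $V$ and $W$ as EAOE. You instead reduce everything to the single equivalence $U\oplus I_\cY \sim I_\cX\oplus V$, obtained from the (correct) criterion that two generalized invertible operators are equivalent as soon as their kernels, cokernels and invertible-part ranges are pairwise isomorphic, combined with your transport lemma, whose computation checks out: SC is stable under replacing one of the two operators by an equivalent one. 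What is striking is that your range-matching chain
\[
\ran U \oplus \cY \simeq \ran U \oplus \ran V \oplus \cokr V \simeq \ran U \oplus \cokr U \oplus \ran V \simeq \cX \oplus \ran V
\]
uses only the complemented splittings $\cX=\ran U\dotplus\cokr U$, $\cY=\ran V\dotplus\cokr V$ and the isomorphism $\cokr U\simeq\cokr V$ supplied by EAE; your appeal to primarity ``to realise these identifications concretely and compatibly'' is actually vacuous, since the block-wise isomorphisms along complemented decompositions already assemble into invertible operators, as you note yourself in your final paragraph. Since Atkinson operators are in particular generalized invertible, this same chain also covers case (1) without invoking stability under finite dimensional quotients. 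In other words, your argument proves the proposition under the weaker hypothesis that $U$ and $V$ are merely generalized invertible and EAE, with no geometric assumptions on $\cX$ and $\cY$ at all --- a more general and more uniform result than the paper's two-case treatment, at the modest cost of losing the by-product the paper obtains in its case (a), namely that there $U$ and $V$ are already SC with each other.
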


\begin{proof}[\bf Proof]
Take $W=U\oplus I_\cY$, the case where $W=I_\cX\oplus V$ is proved analogously. Clearly $U$ and $W$ are EAOE, hence SC. In both case (1) and case (2) it remains to show $V$ and $W$ are SC. In both cases $U$ and $V$ are generalized invertible, and admit decompositions of the form
\[
U=\mat{cc}{U'& 0 \\ 0&0}:\mat{c}{\cX_1\\ \cX_2} \to \mat{c}{\cX_1'\\ \cX_2'},\quad
V=\mat{cc}{V'& 0 \\ 0&0}:\mat{c}{\cY_1\\ \cY_2} \to \mat{c}{\cY_1'\\ \cY_2'},
\]
with $U'$ and $V'$ invertible. Then $U$ being Atkinson means that $\cX_2$ or $\cX_2'$ is finite dimensional, and similar for $V$. Since $U$ and $V$ are EAE we have isomorphisms
\begin{equation}\label{T2T2'}
 T_2:\cX_2\to \cY_2 \ands T_2':\cX_2'\to \cY_2'\quad\mbox{(see \cite[Proposition 3.2]{tHR13})}.
\end{equation}
Note that
\[
U':\cX_1\to\cX_1' \ands V':\cY_1\to\cY_1'
\]
are also isomorphisms. In both cases (1) and (2) one of the following two situations always occurs:
\begin{equation}\label{cases}
\mbox{(a) } \cY_2\simeq\cX_2\simeq \cX_2'\simeq \cY_2' \quad \mbox{or} \quad
\mbox{(b) } \cX_1'\simeq\cX_1\simeq \cX \mbox{ and } \cY_1'\simeq\cY_1\simeq \cY.
\end{equation}
Indeed, if $U$ is Atkinson and $\cX$ stable under finite dimensional quotients, then the kernel $\cX_2$ or cokernel $\cX_2'$ of $U$ is finite dimensional, leading to $\cX\simeq\cX_1$ or $\cX\simeq\cX_1'$, and we already have $\cX_1\simeq \cX_1'$ via $U'$. Likewise we obtain $\cY_1'\simeq\cY_1\simeq \cY$ in case $V$ is Atkinson and $\cY$ stable under finite dimensional quotients.

Now assume we are in case (2). Since $\cX$ is primary, $\cX$ is isomorphic to $\cX_1$ or $\cX_2$ and $\cX$ is isomorphic to $\cX_1'$ or $\cX_2'$. If either $\cX\simeq \cX_1$ or $\cX\simeq\cX_1'$, then we have $\cX_1'\simeq\cX_1\simeq \cX$, as explained above. Thus we have the first set of isomorphisms in (b) or $\cX_2\simeq\cX\simeq \cX_2'$. However, we already have $\cX_2\simeq \cY_2$ and $\cX_2'\simeq \cY_2'$, hence we have either (a) or the first set of isomorphisms in (b). Reasoning in a similar fashion, using that $\cY$ is primary, we note that we have either (a) or the second set of isomorphisms in (b). Thus we are always is one of the situations (a) and (b).

We now show that in both cases (a) and (b) $V$ and $W=U\oplus I_\cY$ are SC. In case $(a)$ $U$ and $V$ are not only EAE, they are also SC, by \cite[Theorem 3.3]{tHR13}. Hence \eqref{SC} holds for some block operator $M=\sbm{A&B\\C&D}\in\cB(\cX\oplus\cY)$ with $A$ and $D$ invertible. Now simply extend $M$ to
\[
\wtil{M}=\mat{cc}{\wtilA&\wtilB\\\wtilC&\wtilD}=\mat{cc|c}{A&0&B\\ 0& I_\cY & 0\\ \hline C &0 &D}.
\]
Clearly $\wtilA$ and $\wtilD$ are invertible, and we have
\begin{align*}
V&=D-CA^{-1}B=\wtilD- \wtilC \wtilA^{-1}\wtilB,\\
W&=\mat{cc}{A-BD^{-1}C&0\\0&I_\cY}=\wtilA- \wtilB \wtilD^{-1}\wtilC.
\end{align*}
Hence $V$ and $W$ are SC.

Finally, assume we are in case (b). Then there exist isomorphisms
\begin{equation}\label{SR}
S:\cX\to\cX_1 \ands R:\cY\to\cY_1.
\end{equation}
Consequently, $U'$ and $I_\cX$ are equivalent and $V'$ and $I_\cY$ are equivalent, by the following identities
\[
I_\cX=(S^{-1}U'^{-1})U' S \ands V'= (V' R) I_\cY R^{-1}.
\]
Therefore, we have
\begin{align*}
\mat{ccc}{V' &0&0\\ 0&0&0\\ 0&0& I_\cX}
&=\mat{ccc}{0&0& V' R\\ 0&T_2'&0\\ S^{-1}U'^{-1}&0&0}
\mat{ccc}{U' &0&0\\ 0&0&0\\ 0&0& I_\cY}
\mat{ccc}{0&0& S\\ 0&T_2^{-1}&0\\ R^{-1}&0&0}
\end{align*}
with the left and right factors on the right hand side invertible. This shows that $V$ and $W$ are EAOE, which implies $V$ and $W$ are SC.
\end{proof}

The next corollary follows immediately from Theorem \ref{T:SC-EssIncomp} and Proposition \ref{P:NotTrans}.

\begin{corollary}
Let the Banach spaces $\cX$ and $\cY$ be essentially incomparable and stable under finite dimensional quotients. Then all operators $U\in\cB(\cX)$ and $V\in\cB(\cY)$ that are EAE are both SC with $U\oplus I_\cY$, as well as with $I_\cX\oplus V$.
\end{corollary}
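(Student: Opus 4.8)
The plan is to reduce the statement to a direct application of part (3) of Theorem \ref{T:SC-EssIncomp} followed by case (1) of Proposition \ref{P:NotTrans}, so that the whole argument amounts to checking that the hypotheses line up. First I would invoke Theorem \ref{T:SC-EssIncomp}(3): since $U$ and $V$ are EAE and act on the essentially incomparable spaces $\cX$ and $\cY$, the theorem guarantees that both $U$ and $V$ are Fredholm (with $\dim\kr U=\dim\kr V$ and $\dim\cokr U=\dim\cokr V$, although only the bare Fredholm property is needed here).

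Next I would record the elementary observation that every Fredholm operator is Atkinson: Fredholmness forces both the kernel and the cokernel to be finite dimensional, whereas the Atkinson condition only demands finite dimensionality of one of the two. Hence $U$ and $V$ are Atkinson. Combined with the standing hypothesis that $\cX$ and $\cY$ are stable under finite dimensional quotients, this means all the hypotheses of Proposition \ref{P:NotTrans}(1) are met.

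Finally I would apply Proposition \ref{P:NotTrans}(1), which immediately furnishes an operator $W$ on $\cX\oplus\cY$ that is SC with both $U$ and $V$, and states that $W$ may be taken to be either $U\oplus I_\cY$ or $I_\cX\oplus V$ --- precisely the assertion of the corollary. Because every step is an appeal to a result already established in the excerpt, there is no genuine obstacle; the sole non-citation step is the trivial definitional implication that Fredholm operators are Atkinson.
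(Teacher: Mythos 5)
Your proposal is correct and follows exactly the paper's route: the corollary is stated there as an immediate consequence of Theorem \ref{T:SC-EssIncomp} (which gives Fredholmness of EAE operators on essentially incomparable spaces) and Proposition \ref{P:NotTrans}(1) (applied since Fredholm operators are in particular Atkinson). Your write-up simply makes explicit the chain of citations that the paper leaves implicit.
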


\section{Some cases where SC and EAE do coincide}\label{S:PositiveRes}

Using similar arguments as in the previous section we can prove two new cases where SC and EAE do coincide.

\begin{proposition}\label{P:XisoY}
Let $U\in\cB(\cX)$ and $V\in\cB(\cY)$ with $\cX\simeq\cY$. Assume either (1) or (2) in Proposition \ref{P:NotTrans} holds, in particular, $U$ and $V$ are generalized invertible. Then the following are equivalent:
\begin{itemize}
  \item[(1)] $U$ and $V$ are SC;
  \item[(2)] $U$ and $V$ are EAE;
  \item[(3)] $\kr U\simeq\kr V$ and $\cokr U\simeq\cokr V$.
\end{itemize}
\end{proposition}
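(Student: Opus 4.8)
The plan is to prove the cycle $(1)\Rightarrow(2)\Rightarrow(3)\Rightarrow(1)$, of which only the last implication carries new content. The implication $(1)\Rightarrow(2)$ is the general fact $\textup{SC}\Rightarrow\textup{EAE}$ recorded in the introduction, so nothing is needed there. For $(2)\Rightarrow(3)$ I would invoke \cite[Proposition 3.2]{tHR13} exactly as in \eqref{T2T2'}: since $U$ and $V$ are generalized invertible (guaranteed by either hypothesis of Proposition \ref{P:NotTrans}) and EAE, one obtains isomorphisms $\kr U\simeq\kr V$ and $\cokr U\simeq\cokr V$. Thus the whole weight of the proposition falls on $(3)\Rightarrow(1)$.

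For $(3)\Rightarrow(1)$ I would reuse the machinery from the proof of Proposition \ref{P:NotTrans}. Write
\[
U=\mat{cc}{U'&0\\0&0}:\mat{c}{\cX_1\\\cX_2}\to\mat{c}{\cX_1'\\\cX_2'},\quad
V=\mat{cc}{V'&0\\0&0}:\mat{c}{\cY_1\\\cY_2}\to\mat{c}{\cY_1'\\\cY_2'},
\]
with $U',V'$ invertible, $\cX_2=\kr U$, $\cX_2'=\cokr U$, $\cY_2=\kr V$, $\cY_2'=\cokr V$. The observation that drives everything is that condition $(3)$ supplies exactly the two isomorphisms $T_2\colon\cX_2\to\cY_2$ and $T_2'\colon\cX_2'\to\cY_2'$ that in Proposition \ref{P:NotTrans} were extracted from EAE through \eqref{T2T2'}. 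Hence the case analysis there applies verbatim: under hypothesis $(1)$ or $(2)$ of Proposition \ref{P:NotTrans}, together with $T_2$ and $T_2'$, one lands in situation (a) or (b) of \eqref{cases}.

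In situation (a) all four spaces $\cX_2,\cX_2',\cY_2,\cY_2'$ are mutually isomorphic, so $U$ and $V$ are SC by \cite[Theorem 3.3]{tHR13}, precisely as in case (a) of Proposition \ref{P:NotTrans}; note this step does not use $\cX\simeq\cY$. In situation (b) I would bring in the standing hypothesis $\cX\simeq\cY$: since case (b) gives $\cX_1\simeq\cX_1'\simeq\cX$ and $\cY_1\simeq\cY_1'\simeq\cY$, combining with $\cX\simeq\cY$ produces an isomorphism $F_1\colon\cX_1\to\cY_1$. Setting $F=\diag(F_1,T_2)$ and $E=\diag(U'F_1^{-1}V'^{-1},(T_2')^{-1})$, both invertible, a direct block computation gives $U=EVF$, so that $U$ and $V$ are in fact equivalent. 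Being EAOE, they are SC by \cite{BGKR05}, which closes the cycle.

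The hard part, and the reason two cases are unavoidable, is a cancellation failure: from $\cX\simeq\cY$ and $\cX_2\simeq\cY_2$ one cannot in general conclude $\cX_1\simeq\cY_1$, since Banach space cancellation breaks down when the complemented pieces are infinite dimensional. This is exactly what the dichotomy circumvents. Situation (a) is the regime of ``large'' kernel/cokernel, where \cite[Theorem 3.3]{tHR13} delivers SC with no cancellation required and without $\cX\simeq\cY$; situation (b) is the complementary regime in which the invertible parts exhaust $\cX$ and $\cY$, so that $\cX\simeq\cY$ does force $\cX_1\simeq\cY_1$ and the explicit equivalence above can be assembled. That hypotheses $(1)$ and $(2)$ genuinely force one of (a),(b) — the point at which primeness/primariness and stability under finite dimensional quotients enter — is inherited unchanged from Proposition \ref{P:NotTrans}, and the sole example excluded by $\cX\simeq\cY$, namely the shifts of Example \ref{E:shifts} (where $\kr=\{0\}\not\simeq\cokr$, forcing case (b) with $\ell^p\not\simeq\ell^q$), confirms that the isomorphism hypothesis is essential.
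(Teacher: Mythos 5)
Your proposal is correct and takes essentially the same route as the paper: the paper cites \cite[Proposition 3.2]{tHR13} for the equivalence of (2) and (3) and then proves (2)$\Rightarrow$(1) via exactly your dichotomy from \eqref{cases} --- case (a) settled by \cite[Theorem 3.3]{tHR13} without using $\cX\simeq\cY$, and case (b) settled by an explicit equivalence $U=EVF$ built from $T_2$, $T_2'$ and the isomorphism $\cX\simeq\cY$, followed by EAOE $\Rightarrow$ SC. The only cosmetic differences are that you close the cycle $(1)\Rightarrow(2)\Rightarrow(3)\Rightarrow(1)$, extracting $T_2,T_2'$ directly from (3) rather than from EAE, and that your factors $E=\diag(U'F_1^{-1}V'^{-1},\,(T_2')^{-1})$ and $F=\diag(F_1,\,T_2)$ are a slightly cleaner (domain-consistent) version of the paper's $E$ and $F$.
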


\begin{proof}[\bf Proof]
Since $U$ and $V$ are generalized invertible, the equivalence of (2) and (3) follows from Proposition 3.2 in \cite{tHR13}. The implication (1) $\Rightarrow$ (2) holds without additional assumptions on $U$ and $V$. Thus it remains to prove (2) $\Rightarrow$ (1). As observed in the proof of Proposition \ref{P:NotTrans}, we are either in case (a) or in case (b) of \eqref{cases}, and in case (a) $U$ and $V$ are SC, even without $\cX\simeq \cY$. Hence, we may assume that (b) in \eqref{cases} holds. Besides the isomorphisms $S$ and $R$ in \eqref{SR} and $T_2$ and $T_2'$ in \eqref{T2T2'}, since $U$ and $V$ are EAE, we now also have an isomorphism $Q:\cX\to\cY$. Then
\[
\mat{cc}{U'&0\\0&0}=\mat{cc}{U'S Q^{-1}R^{-1} &0\\0& T_2'^{-1}}\mat{cc}{V'&0\\0&0}\mat{cc}{ V'^{-1}RQS^{-1} &0\\0& T_2}=EVF,
\]
Thus $E=EVF$ holds with $E=\sbm{U'S Q^{-1}R^{-1} &0\\0& T_2'^{-1}}$ and $F=\sbm{V'^{-1}RQS^{-1} &0\\0& T_2}$, which are invertible. In other words, $U$ and $V$ are equivalent, hence EAOE, and thus SC.
\end{proof}

There are other cases with $\cX\simeq \cY$ where EAE and SC coincide, but we intend to return to this topic in a separate paper.\medskip

\paragraph{\bf Acknowledgments}
This work is based on research supported in part by the National Research Foundation of South Africa (NRF) and the DST-NRF Centre of Excellence in Mathematical and Statistical Sciences (CoE-MaSS). Any opinion, finding and conclusion or recommendation expressed in this material is that of the authors and the NRF and CoE-MaSS do not accept any liability in this regard.


\end{document}